\pgfplotsset{compat=1.5}
\def \R{\mathbb{R}}
\def \N{\mathbb{N}}
\def \Q{\mathbb{Q}}
\theoremstyle{theorem}
\newtheorem{theorem}{Theorem}[section]
\newtheorem{corollary}[theorem]{Corollary}
\theoremstyle{definition}
\newtheorem{definition}[theorem]{Definition}
\begin{document}

\title{Early Transcendental Analysis}
\markright{Early Transcendental Analysis}
\author{Simon Cowell and Philippe Poulin}

\maketitle

\begin{abstract}
In \cite{Weintraub_paper} Steven Weintraub presents a rigorous justification of the ``early transcendental" calculus textbook approach to the exponential and logarithmic functions. However, he uses tools such as term-by-term differentiation of infinite series. We present a rigorous treatment of the early transcendental approach suitable for a first course in analysis, using mainly the supremum property of the real numbers.
\end{abstract}

\section{Motivation}

As Weintraub states in his 1997 article in the American Mathematical Monthly \cite{Weintraub_paper}, several current calculus texts have ``early transcendental" versions, in which the exponential and logarithmic functions are introduced early in the text. The existence and properties of those functions are justified by ``hand-waving" arguments.

In contrast to this, ``late transcendental" versions of the calculus texts have more rigorous proofs. A disadvantage of the late transcendental approach is that relatively advanced topics such as integration or infinite series are introduced before the more elementary concepts of the exponential and logarithmic functions.
However, the level of rigor appropriate to a calculus course is sufficient for a convincing ``late transcendental" treatment, but apparently not for a self-contained ``early transcendental" one.

Teachers who prefer the early transcendental approach (ETA) over the late transcendental approach (LTA) can be confident that the ETA is no less rigorous than the LTA. An advantage of the LTA is that it is close to the historical development \cite{Edwards_book}. An advantage of the ETA is that it is very intuitive. We show here a way to justify the ETA rigorously, in a manner suitable for a first (post-calculus sequence) course in Analysis.

Arguably, the r\^ole of rigor in a calculus course is to introduce the student to the idea of mathematical proof rather than to endow our statements with unshakeable certainty. The latter goal is certainly unattainable in a traditional calculus course, in which the real numbers themselves are defined only intuitively - they are not constructed. Therefore the teacher choosing the ETA over the LTA need not worry that the ETA seems less self-contained than the LTA. Indeed, the justification of the ETA can simply be postponed to an Analysis course.

The essence of the ETA, as it is currently presented in calculus textbooks, is as follows:
For $a = 1/2$, plot a sequence of points with integer coordinates $(n, a^n)$ and connect them with a smooth curve, as in figure a) below.
Repeat the experiment with $a = 2$ and $a = 5$ as in figures b) and c).
Observe that the curves all seem to have tangent lines at the point $(0, 1)$ and sketch those tangent lines, as in figures a), b) and c). Notice that the slope of the tangent line seems to increase strictly and smoothly with $a$, without upper or lower bound.
Conjecture that there is therefore a unique value of $a$ for which the tangent line has slope equal to one, and define $e$ as this value. Sketch the corresponding curve and tangent line as in figure d) below.
In general, define $\ln a$ as the slope of the tangent line to $y = a^x$ at the point $(0, 1)$.
Thus $\ln e = 1$ by definition of $e$.

\begin{tikzpicture}
\begin{axis}[
title={\bf{a)} $y=\left (\frac{1}{2} \right )^x$},
xlabel={$x$},
ylabel={$y$},
xmin=-3, xmax=3,
ymin=-2.3, ymax=27,
x post scale = .65,
y post scale = .65,
]
\addplot[
	black,
	domain=-3:3,
	samples = 50,
]
	{(1/2)^x};
\addplot[
	only marks,
	mark size = 1.5pt,
	color=black,	
]
table{
	x	y
	-3	8
	-2	4
	-1	2
	0	1
	1	.5
	2	.25
	3	.125
};
\addplot[
	black,
	dashed,
	domain=-3:3,
	samples = 2,
]
	{ln(1/2)*x+1};
\end{axis}
\end{tikzpicture}
\begin{tikzpicture}
\begin{axis}[
title={\bf{b)} $y=2^x$},
xlabel={$x$},
ylabel={$y$},
xmin=-3, xmax=3,
ymin=-2.3, ymax=27,
x post scale = .65,
y post scale = .65,
]
\addplot[
	black,
	domain=-3:3,
	samples = 50,
]
	{2^x};
\addplot[
	only marks,
	mark size = 1.5pt,
	color=black,	
]
table{
	x	y
	-3	.125
	-2	.25
	-1	.5
	0	1
	1	2
	2	4
	3	8
};
\addplot[
	black,
	dashed,
	domain=-3:3,
	samples = 2,
]
	{ln(2)*x+1};
\end{axis}
\end{tikzpicture}

\begin{tikzpicture}
\begin{axis}[
title={\bf{c)} $y=5^x$},
xlabel={$x$},
ylabel={$y$},
xmin=-3, xmax=3,
ymin=-2.3, ymax=27,
x post scale = .65,
y post scale = .65,
]
\addplot[
	black,
	domain=-3:3,
	samples = 50,
]
	{5^x};
\addplot[
	only marks,
	mark size = 1.5pt,
	color=black,	
]
table{
	x	y
	-3	.008
	-2	.04
	-1	.2
	0	1
	1	5
	2	25
};
\addplot[
	black,
	dashed,
	domain=-3:3,
	samples = 2,
]
	{ln(5)*x+1};
\end{axis}
\end{tikzpicture}
\begin{tikzpicture}
\begin{axis}[
title={\bf{d)} $y=e^x$},
xlabel={$x$},
ylabel={$y$},
xmin=-3, xmax=3,
ymin=-2.3, ymax=27,
x post scale = .65,
y post scale = .65,
]
\addplot[
	black,
	domain=-3:3,
	samples = 50,
]
	{e^x};
\addplot[
	black,
	dashed,
	domain=-3:3,
	samples = 2,
]
	{x+1};
\end{axis}
\end{tikzpicture}

\noindent
Note that by associativity of multiplication
\[
a^{x+y} = \overbrace{(a \cdot a \cdot \ldots \cdot a)}^{x + y \ \mathrm{factors}} = \overbrace{(a \cdot a \cdot \ldots \cdot a)}^{x \ \mathrm{factors}} \overbrace{(a \cdot a \cdot \ldots \cdot a)}^{y \ \mathrm{factors}} = a^x a^y
\]
for positive integer values of $x$ and $y$.
Conjecture that the relationship $a^{x+y} = a^x a^y$ extends to arbitrary real values of $x$ and $y$.
Using this conjecture, compute
\[
\frac{a^{x+h}-a^x}{h} = \frac{a^x a^h - a^x}{h} = a^x \frac{a^h - 1}{h}.
\]
Take limits as $h \to 0$, yielding
\[
\frac{d}{dx} a^x = (\ln a) a^x.
\]
In particular, note that
\[
\frac{d}{dx} e^x = e^x.
\]
This concludes our brief description of the heart of the ETA, at the level of a Calculus course.

The author of \cite{Weintraub_paper} essentially defines the exponential function as an infinite series, and differentiates it using term-by-term differentiation of the series. Despite its historical precedent, \cite{Edwards_book}, we feel that this is a departure from the spirit of the ETA. Alternatively, the exponential function $a^x$ can be introduced as the continuous extension from the rationals to the reals of the familiar function $a^{p/q}$, as in \cite{Tao_book}. We prove, using this definition and using only elementary techniques such as the inequality of arithmetic and geometric means, that $a^x$ is differentiable. We retrace exactly the steps of the ETA as described above, this time with complete rigor. We recommend that, if a calculus class is taught using the (nonrigorous) ETA, and if it is followed by a class in Analysis, then that Analysis class should use the rigorous ETA as explained herein.

\section{Literature review}

We refer the interested reader to \cite{Edwards_book} for a fascinating history of the logarithmic and exponential functions.

The authors of 
\cite{Bartle_and_Sherbert_book},
\cite{Burkill_book},
\cite{Burkill_and_Burkill_book},
\cite{Burrill_and_Knudsen_book},
\cite{Cooper_book},
\cite{Gaskill_and_Narayanaswami_book},
\cite{Kirkwood_book},
\cite{Krantz_book},
\cite{Leadership_Project_Committee_book},
\cite{Maude_book},
\cite{Reade_book},
\cite{Rudin_book},
\cite{Stromberg_book},
\cite{Whitaker_and_Watson_book}
and
\cite{White_book}
define $e^x$ by its Taylor-McLaurin series.
Of these,
\cite{Burkill_and_Burkill_book},
\cite{Burrill_and_Knudsen_book},
\cite{Cooper_book},
\cite{Krantz_book},
\cite{Leadership_Project_Committee_book},
\cite{Maude_book},
\cite{Reade_book}
and
\cite{White_book}
then prove that $\frac{d}{dx} e^x = e^x$ by term-by-term differentiation of the Taylor-McLaurin series.
The authors of
\cite{Bartle_and_Sherbert_book},
\cite{Burkill_book},
\cite{Gaskill_and_Narayanaswami_book},
\cite{Kirkwood_book},
\cite{Rudin_book},
\cite{Stromberg_book}
and
\cite{Whitaker_and_Watson_book}
on the other hand prove that $\frac{d}{dx} e^x = e^x$ as follows: First by using the series definition of $e^x$ to prove that $e^{x+y} = e^x e^y$. Then by manipulating the series and using a theorem about continuity of power series, it follows that the required limit exists.

The authors of
\cite{Courant_book_I},
\cite{Courant_and_John_book},
\cite{Fulks_book},
\cite{Gaughan_book_4},
\cite{Gaughan_book_5},
\cite{Protter_and_Morrey_book_91}
and
\cite{Smith_book}
define $\ln x$ by a definite integral. They then define $e^x$ as the inverse of $\ln x$, except for \cite{Courant_and_John_book} who define $a^x$ in the same way as we do, and define $e$ by $\sum_{n=0}^{\infty} \frac{1}{n!}$, and then prove that $e^x$ is the inverse of $\ln x$.
In all eight of these sources, $e^x$ is then differentiated using the chain rule or the Inverse Function Theorem, and the Fundamental Theorem of Calculus.

In
\cite{Granville_and_Smith_and_Longley_book},
\cite{Haeussler_and_Paul_book}
and
\cite{Kaplan_book}
$e$ is defined as $\lim_{x \to 0} (1 + x)^{1/x}$.
In \cite{Haeussler_and_Paul_book} and \cite{Kaplan_book} this limit is also used to give a numerical approximation to the value of $e$.
These three sources define $\ln x$ as the inverse function of $e^x$.
They use the existence of the above limit and properties of logarithms to prove that $\frac{d}{dx} \ln x = \frac{1}{x}$.
They then deduce, via the chain rule, that $\frac{d}{dx} e^x = e^x$.
The authors of \cite{Haeussler_and_Paul_book} draw a graph to justify the existence of the key limit.
There is no graph in \cite{Kaplan_book}, nor are there as many details as in \cite{Haeussler_and_Paul_book}.
The authors of \cite{Granville_and_Smith_and_Longley_book} admit that proving that $\lim_{x \to 0} \left (1 + \frac{1}{x} \right )^x$ exists is beyond the scope of their book.

The authors of \cite{Gupta_and_Rani_book} define $a^x$ as we do, and they define $\log_a x$ as the inverse of $a^x$. They define $e$ as $\lim_{n \to \infty} \left ( 1 + \frac{1}{n} \right )^n$.

Reference
\cite{Goldstein_and_Lay_and_Schneider_book}
takes the same approach as we do but with much less rigour. They claim without proof that $a^x$ is differentiable at 0, and that there is a unique value $e$ defined as we define it here.

In
\cite{Agnew_book}
the author defines $a^x$ as we do, then shows that its derivative at zero exists by writing it in terms of a telescoping sum which becomes a Riemann sum, which converges to a certain definite integral.

In
\cite{Goldberg_book}
$\sinh^{-1} x$ is defined by a definite integral, and $\cosh x$ as $\sqrt{1  + \sinh^2 x}$.
The author then uses the Inverse Function Theorem and the Fundamental Theorem of Calculus to differentiate $\sinh x$, showing that $\sinh' x = \sqrt{1 + \sinh^2 x}$ hence $\sinh'x = \cosh x$.
It then follows, from the equation before last, that $\sinh'' x = \sinh x$, hence $\cosh' x = \sinh x$.
The author then defines $e^x$ as $\cosh x + \sinh x$, and deduces from the above that $\frac{d}{dx} e^x = e^x$.
\footnote{
An analagous approach is used in \cite{Goldberg_book} to treat those transcendental functions which we otherwise omit to mention in the present work, namely the trigonometric functions. The author defines $\sin x$ and $\cos x$ and differentiates them.
He points out that the usual geometric proof of the existence of the limit required to differentiate $\sin x$ depends on knowing the area of a circle, which is usually found by computing an integral using a trigonometric substitution, creating a circular argument. His method avoids this circularity.}

\vspace{.5in}

\section{Theory}

That rational powers $a^q$ of positive reals $a$ exist is a consequence of the supremum property of $\R$. See for example \cite{Tao_book}. By a further application of the supremum property, one can define arbitrary real powers of positive reals as follows:

\begin{definition}
Given $a > 0$ and $x \in \R$, let $a^x = \sup \{a^q : q \in \Q, q < x\}$.
\end{definition}

We take for granted the proofs of the following two theorems, see for instance \cite{Tao_book}.
\begin{theorem}
\label{thm: basic properties of x^a}
For $x \in \R$ with $x > 0$ and $a \in \R$ \\
(i) $x^a$ is strictly decreasing in $x > 0$ for $a < 0$ \\
(ii) $x^a$ is strictly increasing in $x > 0$ for $a > 0$ \\
(iii) $x^a$ is continuous in $x > 0$
\end{theorem}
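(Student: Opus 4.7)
The plan is to bootstrap from rational exponents, where the three properties are classical consequences of the existence of $n$-th roots (and hence of the supremum property), to the real case via the given definition $x^a = \sup\{x^q : q \in \Q,\ q < a\}$. I would first record for later use the needed facts on $(0,\infty)$ with rational exponents: strict monotonicity of $x \mapsto x^q$ in $x$ (increasing for $q > 0$, decreasing for $q < 0$), continuity in $x$, the laws of exponents, and monotonicity of $q \mapsto x^q$ in $q$ (increasing for $x > 1$, decreasing for $0 < x < 1$). Before attacking (i)--(iii), I would verify directly from the sup definition the reciprocal identity $x^a = 1/(1/x)^a$, which will be the standard device for reducing the awkward case $0 < x < 1$ to the cleaner case $x > 1$ throughout.

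\textbf{(ii) Strict monotonicity for $a > 0$.} For $0 < x_1 < x_2$, the weak inequality $x_1^a \le x_2^a$ is immediate: for each rational $q < a$, $x_1^q \le x_2^q \le x_2^a$, so $x_2^a$ is an upper bound for the set defining $x_1^a$. To upgrade to strict inequality I would set $c = x_2/x_1 > 1$, fix a rational $q_0$ with $0 < q_0 < a$, and observe that for all rationals $q \in [q_0, a)$,
\[
x_2^q = c^q\, x_1^q \ge c^{q_0}\, x_1^q.
\]
Taking the supremum over such $q$ gives $x_2^a \ge c^{q_0} x_1^a$ in the case $x_1 \ge 1$, where the sup defining $x_1^a$ is indeed attained along $q \nearrow a$; for the case $0 < x_1 < 1$ I would pass through the reciprocal identity. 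Since $c^{q_0} > 1$ and $x_1^a > 0$, this gives $x_2^a > x_1^a$.

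\textbf{(i) and (iii).} Part (i) follows immediately from (ii) applied to the positive exponent $-a$, together with the reciprocal relation $x^a = 1/x^{-a}$ (again to be verified from the definition). For continuity at $x_0 > 0$, given $\varepsilon > 0$ I would first pick rationals $q_- < a < q_+$ close enough to $a$ that $x_0^{q_\pm}$ lie within $\varepsilon/2$ of $x_0^a$; this is available from the sup definition and monotonicity of $q \mapsto x_0^q$. Then continuity of the two fixed-rational-exponent functions $x \mapsto x^{q_\pm}$ at $x_0$ furnishes $\delta > 0$ with the standard $\varepsilon/2$ control, and the monotonicity of $q \mapsto x^q$ (for each near-by base $x$) sandwiches $x^a$ between $x^{q_-}$ and $x^{q_+}$, giving the full $\varepsilon$ control on $|x^a - x_0^a|$.

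\textbf{Main obstacle.} The main subtlety is that the set $\{x^q : q \in \Q,\ q < a\}$ is approached by $q \nearrow a$ when $x > 1$, but for $0 < x < 1$ the function $q \mapsto x^q$ is \emph{decreasing}, so the sup is controlled by $q$ far to the left of $a$; naively the set is unbounded. Verifying the reciprocal identity $x^a = 1/(1/x)^a$ from the sup definition — and then using it to transfer every argument from $x > 1$ to $0 < x < 1$ — is the real work of the proof, and it is also what lets the argument for strict monotonicity in (ii) close cleanly.
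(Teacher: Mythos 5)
A preliminary caveat: the paper offers no proof of this theorem to compare against. It is one of the two results the authors explicitly ``take for granted,'' with a pointer to Tao's \emph{Analysis I}, so your proposal can only be judged on its own terms. Your overall strategy --- settle the rational-exponent case first, pass to real exponents through the supremum, and use the reciprocal trick to reduce bases in $(0,1)$ to bases above $1$ --- is the standard route and is essentially Tao's, except that Tao defines $x^{\alpha}$ as a limit of $x^{q_n}$ along rational $q_n \to \alpha$ rather than as a supremum. That difference matters at exactly the point you single out as the main obstacle, and your proposed resolution does not close it: for $0 < x < 1$ the set $\{x^q : q \in \Q,\ q < a\}$ is genuinely unbounded above (let $q \to -\infty$), so under the paper's literal Definition 3.1 the supremum is $+\infty$ and the identity $x^a = 1/(1/x)^a$ is simply false for such $x$. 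It therefore cannot be ``verified from the sup definition''; it has to be installed as part of the definition (or the definition replaced by the limit version, or by an infimum over $q > a$ when the base is below $1$). This is arguably a defect of the paper's definition rather than of your argument, but as written your plan leans on a verification that cannot be carried out, and the case $0 < x < 1$ of all three parts hangs on it.

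A second, smaller slip: in the weak-inequality step of (ii) you assert $x_1^q \le x_2^q$ ``for each rational $q < a$,'' but for negative $q$ the base-monotonicity reverses, so this is false as stated. You need to first discard the exponents $q \le 0$ from the supremum --- harmless when $x_1 \ge 1$, since $q \mapsto x_1^q$ is then nondecreasing and the supremum over $q < a$ equals the supremum over $0 < q < a$ --- before comparing bases. With these two repairs (the second is pure bookkeeping, the first requires amending the definition itself), the rest of the proposal is sound: the strict-inequality device $x_2^q \ge c^{q_0} x_1^q$ with $c = x_2/x_1 > 1$, the derivation of (i) from (ii) via $x^a = 1/x^{-a}$, and the two-sided rational sandwich $x^{q_-} \le x^a \le x^{q_+}$ for continuity are all standard and correct.
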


\begin{theorem}
\label{thm: basic properties of a^x}
For $a, b \in \R$ with $a, b > 0$ and $x, y \in \R$ \\
(i) $a^{x + y} = a^x a^y$ \\
(ii) $(a^x)^y = a^{xy}$ \\
(iii) For $0 < a < 1$, $a^x$ is strictly decreasing in $x$ and for $a > 1$, $a^x$ is strictly increasing in $x$ \\
(iv) $a^x$ is continuous in $x$ \\
(v) $a^1 = a$ \\
(vi) $a^0 = 1$ \\
(vii) $a^x > 0$ \\
(viii) For $0<a<1$, $\lim_{x \to -\infty} a^x = + \infty$ and $\lim_{x \to + \infty} a^x = 0$, and  for $a > 1$, $\lim_{x \to -\infty} a^x = 0$ and $\lim_{x \to +\infty} a^x = +\infty$ \\
(ix) $a^x b^x = (ab)^x$.
\end{theorem}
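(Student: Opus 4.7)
The plan is to bootstrap from the theory of rational exponents, which I take as given (it is established in \cite{Tao_book} via the supremum property applied to $n$-th roots). I carry out the proof assuming $a > 1$; the case $a = 1$ is trivial, and the case $0 < a < 1$ follows by applying the theorem to $1/a$ through the identity $a^x = 1/(1/a)^x$, itself a consequence of (i) and (vi). Note that the stated supremum definition makes sense only for $a \geq 1$, since for $0 < a < 1$ the set $\{a^q : q < x\}$ is unbounded above, so one implicitly adapts the definition in that case.

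The elementary items fall out quickly. Positivity (vii) is immediate since any rational $q < x$ gives $a^x \geq a^q > 0$. For monotonicity (iii), given $x < y$ I interpose rationals $x < p_1 < p_2 < y$ and chain $a^x \leq a^{p_1} < a^{p_2} \leq a^y$ using the sup definition and the rational case. Items (v) and (vi) follow by showing the relevant sup sets are bounded above by the claimed value and approached from below by rationals $q_n \to 1^-$ or $q_n \to 0^-$; this squeeze, and the proof of continuity (iv), both rest on the rational estimate $a^{1/n} \to 1$ as $n \to \infty$. For (iv), given $\epsilon > 0$, I pick rationals $q_1 < x_0 < q_2$ with $a^{q_1}(a^{q_2 - q_1} - 1) < \epsilon$, and monotonicity sandwiches both $a^x$ and $a^{x_0}$ between $a^{q_1}$ and $a^{q_2}$ for $x$ near $x_0$.

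The addition law (i) is the core. One direction: for rationals $p < x$ and $q < y$, the rational identity $a^p a^q = a^{p+q}$ together with $p + q < x + y$ gives $a^p a^q \leq a^{x+y}$, and taking sups in $p$ and $q$ separately yields $a^x a^y \leq a^{x+y}$. For the reverse: given a rational $r < x + y$, density furnishes a rational $q$ with $r - x < q < y$, so that $p := r - q$ is rational with $p < x$, and $a^r = a^p a^q \leq a^x a^y$; the sup over $r$ finishes. For the power law (ii), I first verify $(a^x)^q = a^{xq}$ for rational $q$ by reducing to positive integer exponents via (i) and to $n$-th roots via uniqueness of positive roots; then, with $b = a^x$, the sup defining $b^y$ becomes $\sup\{a^{xq} : q \in \Q,\, q < y\}$, which continuity (iv) identifies with $a^{xy}$ using density of $x\Q$ in a neighborhood of $xy$.

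The remaining items are routine: (viii) follows from monotonicity together with the unboundedness of $a^n$ for integer $n$ (and the reciprocal identity $a^{-x} = 1/a^x$ for the other end), and (ix) extends the rational identity $a^q b^q = (ab)^q$ by continuity (iv). The main obstacle in the whole program is not any single item but the need to import from the rational theory the one sharp quantitative estimate $a^{1/n} \to 1$; it powers the squeeze arguments in (v), (vi), and (iv), and from there the rest of the theorem assembles cleanly from the sup definition, density of $\Q$, and monotonicity.
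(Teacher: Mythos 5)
The paper does not prove this theorem at all: the authors explicitly take it for granted and refer the reader to Tao's \emph{Analysis I}, so there is no in-paper argument to compare yours against. Your development is essentially the standard one and is sound: everything is reduced to the rational theory via the supremum definition, with the single quantitative input $a^{1/n} \to 1$ powering the squeeze arguments for (iv), (v), (vi), and with the two-sided supremum manipulation (using density of $\Q$ to split a rational $r < x+y$ as $p+q$ with $p<x$, $q<y$) giving the addition law. You also make a genuinely valuable observation that the paper glosses over: Definition 3.1 as literally stated fails for $0 < a < 1$, since $\{a^q : q \in \Q,\ q < x\}$ is then unbounded above and the supremum is $+\infty$; your repair (prove everything for $a>1$ and pass to $1/a$ via $a^x = 1/(1/a)^x$) is the right one. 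One small point to tighten: in (ii), when $x<0$ the intermediate base $b = a^x$ is itself less than $1$, so the supremum formula for $b^y$ suffers from the same defect; that case must also be routed through the reciprocal identity, e.g. $(a^x)^y = \left((a^{-x})^y\right)^{-1} = (a^{-xy})^{-1} = a^{xy}$, which your framework supports but your sketch does not explicitly say.
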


\begin{theorem}[The Arithmetic-Geometric Mean Inequality]
\label{thm: arithmetic-geometric mean inequality}
For non-negative real numbers $a_1, a_2, \ldots, a_n$,
\[
\sqrt[n]{a_1 a_2 \cdots a_n} \leq \frac{a_1 + a_2 + \cdots + a_n}{n}
\]
with equality if and only if $a_1 = a_2 = \ldots = a_n$.
\end{theorem}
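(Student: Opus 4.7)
My plan is to use Cauchy's forward-backward induction, which requires only the order structure and algebra of $\R$ and so fits the elementary spirit of the rest of the paper. The base case is $n=2$: from $(\sqrt{a_1}-\sqrt{a_2})^2 \geq 0$ one expands and rearranges to get $\sqrt{a_1 a_2} \leq (a_1+a_2)/2$, with equality precisely when $a_1 = a_2$. Here I rely only on the existence of square roots of non-negative reals, which follows from the supremum property.

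Next I would carry out the forward induction step: if the inequality holds for $n$ terms, it holds for $2n$ terms. Given $a_1,\ldots,a_{2n}$, I would split them into two blocks of length $n$, apply the $n$-term hypothesis to each block, and then apply the $n=2$ case to the two resulting geometric means. Concretely,
\[
\frac{a_1+\cdots+a_{2n}}{2n} = \tfrac{1}{2}\Bigl(\tfrac{a_1+\cdots+a_n}{n} + \tfrac{a_{n+1}+\cdots+a_{2n}}{n}\Bigr) \geq \tfrac{1}{2}\bigl(G_1 + G_2\bigr) \geq \sqrt{G_1 G_2} = \sqrt[2n]{a_1\cdots a_{2n}},
\]
where $G_1,G_2$ are the two $n$-th geometric means. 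Iterating, I obtain the inequality for every $n$ of the form $2^k$.

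The backward step closes the gaps: if the inequality holds for $n+1$ terms, it holds for $n$ terms. Given $a_1,\ldots,a_n$ with arithmetic mean $A = (a_1+\cdots+a_n)/n$, I would set $a_{n+1} := A$, apply the $(n+1)$-term inequality, and then isolate $\sqrt[n]{a_1\cdots a_n}$ using $A \cdot A = A^2$ and the identity $(a_1+\cdots+a_n+A)/(n+1) = A$. A short algebraic manipulation — essentially raising both sides to the $(n+1)$-th power and dividing by $A$ (which we may assume positive, treating the trivial case $A=0$ separately) — yields the $n$-term inequality. Combining the forward and backward steps, the inequality holds for all $n \geq 1$.

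For the equality clause, I would track equality through each step. In the base case $n=2$ equality forces $a_1 = a_2$. In the forward step, equality in the final bound requires both block means to equal their geometric means (forcing equality within each block by induction) and $G_1 = G_2$ (forcing equality across blocks). In the backward step, equality for $n$ terms produces equality in the $(n+1)$-term inequality applied to $a_1,\ldots,a_n,A$, which by induction forces $a_1 = \cdots = a_n = A$. The main delicate point, and the one I would write most carefully, is the backward step: one must verify that setting $a_{n+1} = A$ really does yield the $n$-term conclusion after the algebra, and one must separately dispose of the degenerate case in which some $a_i = 0$, since then the geometric mean vanishes and the inequality is immediate while the equality clause requires all $a_i$ to vanish.
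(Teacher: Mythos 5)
Your proposal is correct, and it is the classical Cauchy forward--backward induction: prove the case $n=2$, double to get all powers of $2$, then descend from $n+1$ to $n$ by padding with the arithmetic mean $A$. The paper uses the same two ingredients (the $n=2$ case and padding by $A$) but fuses them into a single direct induction from $n$ to $n+1$: it pads the $n+1$ numbers with $n-1$ copies of $A$ to obtain $2n$ numbers, splits these into the block $a_1,\ldots,a_n$ and the block $a_{n+1},A,\ldots,A$, applies the $n$-term hypothesis to each block and the $2$-term case to combine, arriving at $\sqrt[2n]{a_1\cdots a_{n+1}A^{n-1}}\leq A$, and then divides by $A^{(n-1)/(2n)}$ to isolate $\sqrt[n+1]{a_1\cdots a_{n+1}}$. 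The paper's version buys a single, self-contained induction with one hypothesis and one equality condition to track; your version buys two individually simpler steps at the cost of maintaining two inductive mechanisms (and tracking equality through both), plus the organizational detour through powers of $2$. Your handling of the degenerate cases ($A=0$ in the backward step, some $a_i=0$ for the equality clause) is the right care to take; the paper's division by $A^{(n-1)/(2n)}$ implicitly requires the same attention when $A=0$. Either route is acceptable here, since the theorem is only used later with strictly positive entries.
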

\begin{proof}
The statement is trivially true when $n = 1$.
In case $n = 2$, given $a_1, a_2 \geq 0$ we have
$0 \leq (\sqrt{a_1} - \sqrt{a_2})^2 = a_1 - 2 \sqrt{a_1 a_2} + a_2$, whence $2 \sqrt{a_1 a_2} \leq a_1 + a_2$, and $\sqrt{a_1a_2} \leq \frac{a_1 + a_2}{2}$. Evidently equality obtains if and only if $a_1 = a_2$.
We will prove the general statement by induction on $n$.
Indeed, suppose the statement holds for some $n \in \N^*=\N \setminus \{0\}$ and suppose that $a_1, \ldots, a_n, a_{n+1} \geq 0$.
Let $A = \frac{a_1 + \cdots + a_{n+1}}{n+1}$.
By the case $n=2$ and by the induction hypothesis and by Theorem \ref{thm: basic properties of a^x} parts (ii) and (ix) we have
\begin{align*}
\sqrt[2n]{a_1 \cdots a_{n+1} A^{n-1}}
&= \sqrt{\sqrt[n]{a_1 \cdots a_n} \sqrt[n]{a_{n+1} A^{n-1}}} \\
&\leq \frac{\sqrt[n]{a_1 \cdots a_n} + \sqrt[n]{a_{n+1} A^{n-1}}}{2} \\
&\leq \frac{a_1 + \cdots + a_n}{2n} + \frac{a_{n+1} + (n-1) A}{2n} \\
&= \frac{(n+1)A + (n-1)A}{2n} \\
&= A,
\end{align*}
the condition for equality being that $\sqrt[n]{a_1 \cdots a_n} = \sqrt[n]{a_{n+1} A^{n-1}}$ and $a_1 = \cdots = a_n$ and $a_{n+1} = A$, equivalently that $a_1 = \cdots = a_n = a_{n+1}$. Dividing both sides of the inequality $\sqrt[2n]{a_1 \cdots a_{n+1} A^{n-1}} \leq A$ by $A^{\frac{n-1}{2n}}$ we have $\sqrt[2n]{a_1 \cdots a_{n+1}} \leq A^{\frac{n+1}{2n}}$, and raising both sides to the power of $\frac{2n}{n+1}$ gives $\sqrt[n+1]{a_1 \cdots a_{n+1}} \leq A$, as required.
This completes the proof by induction.
\end{proof}

\begin{definition}
Denote $\{a \in \R : a>0\}$ by $\R^+$.
\end{definition}

\begin{theorem}
\label{thm: a^x has nondecreasing difference quotient}
For $a \in \R^+$ and $h, k \in \R \setminus \{0\}$ with $h < k$,
\[
\frac{a^h - 1}{h} \leq \frac{a^k - 1}{k}.
\]
\end{theorem}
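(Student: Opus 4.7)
The approach is to establish the Bernoulli-type inequality $c^\mu \leq \mu c + (1 - \mu)$, valid for all $c > 0$ and $\mu \in [0, 1]$, as an intermediate lemma, and then to deduce the theorem by considering three cases depending on the signs of $h$ and $k$.

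For the lemma, I would first prove it for rational $\mu = p/q$ with integers $0 \leq p \leq q$ and $q > 0$ by applying Theorem \ref{thm: arithmetic-geometric mean inequality} to the $q$ non-negative reals consisting of $p$ copies of $c$ and $q - p$ copies of $1$. Their arithmetic mean is $(pc + q - p)/q = \mu c + (1 - \mu)$ and their geometric mean is $\sqrt[q]{c^p} = c^{p/q} = c^\mu$ by Theorem \ref{thm: basic properties of a^x} part (ii), so AM-GM gives the claimed inequality immediately. To extend to irrational $\mu \in [0, 1]$, I would take a sequence of rationals $\mu_n \in [0, 1] \cap \Q$ with $\mu_n \to \mu$ and pass to the limit using continuity of $t \mapsto c^t$ from Theorem \ref{thm: basic properties of a^x} part (iv).

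With the lemma established, the case $0 < h < k$ is immediate: set $c = a^k$ and $\mu = h/k \in (0, 1)$, so that $c^\mu = (a^k)^{h/k} = a^h$ by Theorem \ref{thm: basic properties of a^x} part (ii). The lemma gives $a^h - 1 \leq (h/k)(a^k - 1)$, and dividing by $h > 0$ yields the desired inequality. The case $h < k < 0$ is symmetric: take $c = a^h$ and $\mu = k/h$, which lies in $(0, 1)$ because both $h$ and $k$ are negative with $|k| < |h|$; the lemma produces $a^k - 1 \leq (k/h)(a^h - 1)$, and dividing by $k < 0$ flips the inequality into the form we want. For the remaining case $h < 0 < k$, I would apply the lemma to $c = a^{k - h}$ and $\mu = -h/(k - h) \in (0, 1)$, obtaining $a^{-h} \leq \frac{-h}{k-h} a^{k-h} + \frac{k}{k-h}$; multiplying through by $a^h > 0$ and rearranging yields $k(a^h - 1) \geq h(a^k - 1)$, and since $hk < 0$ dividing by $hk$ flips this to the claimed inequality.

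The main obstacle will be the rational-to-real extension of the Bernoulli inequality, which hinges on the continuity of $t \mapsto c^t$ already guaranteed by Theorem \ref{thm: basic properties of a^x} part (iv); one must choose the approximating rationals inside $[0,1]$ so that the AM-GM step applies at every stage. Once that extension is in place, the three-case deduction is routine algebraic bookkeeping, and the Arithmetic-Geometric Mean Inequality carries the essential weight of the argument, as foreshadowed in the paper's introduction.
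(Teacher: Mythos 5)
Your proof is correct, and it rests on the same engine as the paper's --- the Arithmetic--Geometric Mean Inequality applied to copies of $c$ and copies of $1$, which is exactly the computation behind the paper's inequality \eqref{eqn: difference quotient inequality for 0 < h < k, h, k rational} --- but the organization is genuinely different and in places cleaner. The paper proves the difference-quotient inequality for \emph{rational} $h,k$ in each of the three sign cases, where the mixed case $h<0<k$ is handled by chaining through $\pm\min(|h|,|k|)$ with the auxiliary estimate $\frac{a^{-h}-1}{-h}\le\frac{a^h-1}{h}$ (obtained from $(a^h-1)^2\ge 0$), and only then passes from $\Q$ to $\R$ by approximating both $h$ and $k$ by rational sequences and invoking continuity in the exponent. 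You instead isolate the Bernoulli-type inequality $c^{\mu}\le\mu c+(1-\mu)$ as a named lemma, perform the rational-to-real limit \emph{once} at the level of the single parameter $\mu$, and then dispatch all three sign cases for real $h,k$ by direct substitutions ($c=a^{k}$, $c=a^{h}$, and $c=a^{k-h}$ with $\mu=-h/(k-h)$ respectively); I checked the algebra in each case, including the sign flips from dividing by $h<0$, $k<0$, and $hk<0$, and it is sound. What your route buys is a single, reusable convexity-type lemma and a uniform treatment of the mixed-sign case with no chaining; what the paper's route buys is that the only limit argument needed is the final density-of-$\Q$ step, at the cost of an extra ad hoc inequality for the case $h<0<k$. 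Either way the Arithmetic--Geometric Mean Inequality carries all the weight, as you note.
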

\begin{proof}
Let $a \in \R^+$ and let $h, k \in \Q$ with $0 < h < k$.
Then there exist $m, n \in \N^*$ such that $\frac{h}{k} = \frac{m}{m+n}$.
By Theorem \ref{thm: arithmetic-geometric mean inequality} we have $\sqrt[m+n]{(a^k)^m \cdot 1 \cdot 1 \cdots 1} \leq \frac{m a^k + 1 + 1 + \cdots + 1}{m + n}$ i.e. $\sqrt[m+n]{(a^k)^m} \leq \frac{m a^k + n}{m+n}$. By Theorem \ref{thm: basic properties of a^x} part (ii) this means that $a^{k \frac{m}{m+n}} \leq \frac{m}{m + n} a^k + 1 - \frac{m}{m + n}$, i.e. $a^h \leq \frac{h}{k} a^k + 1 - \frac{h}{k}$.
Given that $h > 0$ this yields
\begin{equation}
\label{eqn: difference quotient inequality for 0 < h < k, h, k rational}
\frac{a^h - 1}{h} \leq \frac{a^k - 1}{k}, \quad h, k \in \Q, \ 0 < h < k.
\end{equation}
Now assume that $h, k \in \Q$ with $h < k < 0$.
From \eqref{eqn: difference quotient inequality for 0 < h < k, h, k rational} we have $\frac{a^{-k} - 1}{-k} \leq \frac{a^{-h} - 1}{-h}$. Replacing $a$ by $a^{-1}$ gives $\frac{(a^{-1})^{-k} - 1}{-k} \leq \frac{(a^{-1})^{-h} - 1}{-h}$, and by Theorem \ref{thm: basic properties of a^x} part (ii) we have $\frac{a^k - 1}{-k} \leq \frac{a^h - 1}{-h}$, hence
\begin{equation}
\label{eqn: difference quotient inequality for h < k < 0, h, k rational}
\frac{a^h - 1}{h} \leq \frac{a^k - 1}{k}, \quad h, k \in \Q, \ h < k < 0.
\end{equation}
Now let $h \in \R$ with $h>0$. Then $(a^h - 1)^2 \geq 0$, $0 \leq (a^h)^2 - 2 a^h + 1$ and $a^h - 1 \leq (a^h)^2 - a^h$, so by Theorem \ref{thm: basic properties of a^x} parts (i), (vi) and (vii),
$1 - a^{-h} \leq a^h - 1$ and
\begin{equation}
\label{eqn: difference quotient inequality for -h and h, h real}
\frac{a^{-h} - 1}{-h} \leq \frac{a^h - 1}{h}, \quad h \in \R, \ h > 0.
\end{equation}
Let $h, k \in \Q$ with $h < 0 < k$, and let $x = \min(|h|, |k|)$. Then by inequalities \eqref{eqn: difference quotient inequality for h < k < 0, h, k rational}, \eqref{eqn: difference quotient inequality for -h and h, h real} and \eqref{eqn: difference quotient inequality for 0 < h < k, h, k rational},
$\frac{a^h - 1}{h} \leq \frac{a^{-x} - 1}{-x} \leq \frac{a^x - 1}{x} \leq \frac{a^k - 1}{k}$, hence
\begin{equation}
\label{eqn: difference quotient inequality for h < 0 < k, h, k rational}
\frac{a^h - 1}{h} \leq \frac{a^k - 1}{k}, \quad h, k \in \Q, \ h < 0 < k.
\end{equation}
Considering inequalities \eqref{eqn: difference quotient inequality for 0 < h < k, h, k rational} , \eqref{eqn: difference quotient inequality for h < k < 0, h, k rational} and \eqref{eqn: difference quotient inequality for h < 0 < k, h, k rational} we have
\begin{equation}
\label{eqn: difference quotient inequality for nonzero rational h and k}
\frac{a^h - 1}{h} \leq \frac{a^k - 1}{k}, \quad h, k \in \Q \setminus \{0\}, \ h < k.
\end{equation}
It remains to extend this inequality from rational $h$ and $k$ to real $h$ and $k$.
Indeed, let $h, k \in \R \setminus \{0\}$ with $h < k$.
Choose, as we may by the density of the rationals in the reals, sequences $(h_n) \subset \Q \setminus \{0\}$ and $(k_n) \subset \Q \setminus \{0\}$ such that $\lim_{n \to \infty} h_n = h$ and $\lim_{n \to \infty} k_n = k$.
Then for $n$ sufficiently large $h_n < k_n$, hence $\frac{a^{h_n} - 1}{h_n} \leq \frac{a^{k_n} - 1}{k_n}$ by \eqref{eqn: difference quotient inequality for nonzero rational h and k}.
Taking limits as $n$ tends to infinity and using Theorem \ref{thm: basic properties of a^x} part (iv) we have $\frac{a^h - 1}{h} \leq \frac{a^k - 1}{k}$, which completes the proof.
\end{proof}

\paragraph{Remark} Theorem \ref{thm: a^x has nondecreasing difference quotient} is equivalent to the statement that $a^x$ is a convex function of $x$ provided that $a$ is a positive real number.

\begin{theorem}
\label{thm: a^x is differentiable at zero}
For $a \in \R^+$, the function $x \mapsto a^x$ is differentiable at $0$.
\end{theorem}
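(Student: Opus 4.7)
The plan is to show that the one-sided limits of $\frac{a^h - 1}{h}$ as $h \to 0$ both exist, are finite, and coincide. Since we already know from Theorem \ref{thm: a^x has nondecreasing difference quotient} that the function $\varphi(h) := \frac{a^h - 1}{h}$ is nondecreasing on $\R \setminus \{0\}$, a monotone-limit argument gets us most of the way, and the remaining obstacle is closing the gap between the two one-sided limits.

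First I would observe that, by monotonicity, for any fixed $h_0 > 0$ and $k_0 < 0$ and all $h$ with $k_0 \leq h \leq h_0$ and $h \neq 0$, we have $\varphi(k_0) \leq \varphi(h) \leq \varphi(h_0)$. Thus $\varphi$ is bounded near $0$, and being monotone on $(0, h_0]$ and on $[k_0, 0)$ it admits one-sided limits
\[
R := \lim_{h \to 0^+} \varphi(h) = \inf_{h > 0} \varphi(h), \qquad L := \lim_{h \to 0^-} \varphi(h) = \sup_{h < 0} \varphi(h),
\]
both finite, with $L \leq R$ by Theorem \ref{thm: a^x has nondecreasing difference quotient}.

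The main work is to prove $R = L$. The key identity is
\[
\varphi(h) - \varphi(-h) = \frac{a^h - 1}{h} + \frac{a^{-h} - 1}{h} = \frac{a^h + a^{-h} - 2}{h} = \frac{a^{-h}(a^h - 1)^2}{h}
\]
for $h > 0$, using that $a^h + a^{-h} - 2 = a^{-h}(a^h - 1)^2$ (which in turn follows from Theorem \ref{thm: basic properties of a^x} parts (i), (ii), (vii)). Rewriting the right-hand side as $a^{-h}(a^h - 1) \cdot \varphi(h)$, I use the continuity of $a^x$ at $0$ (Theorem \ref{thm: basic properties of a^x} part (iv), together with $a^0 = 1$) to conclude that $a^{-h} \to 1$ and $a^h - 1 \to 0$ as $h \to 0^+$, while $\varphi(h)$ remains bounded near $0$ by the argument above. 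Hence $\varphi(h) - \varphi(-h) \to 0$, which forces $R - L = 0$.

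The combined limit $\lim_{h \to 0} \varphi(h) = R = L$ therefore exists in $\R$, which is precisely the statement that $x \mapsto a^x$ is differentiable at $0$. The only step I anticipate needing care is the boundedness of $\varphi$ near $0$, which is essential for concluding that the product $a^{-h}(a^h - 1) \varphi(h)$ tends to $0$; fortunately this boundedness is an immediate consequence of the monotonicity already established.
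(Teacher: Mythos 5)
Your proof is correct and takes essentially the same route as the paper: monotonicity from Theorem \ref{thm: a^x has nondecreasing difference quotient} yields the finite one-sided limits, and continuity of $a^x$ at $0$ (Theorem \ref{thm: basic properties of a^x} parts (iv) and (vi)) closes the gap between them. The only cosmetic difference is that you use the additive identity $\varphi(h)-\varphi(-h)=a^{-h}(a^h-1)\,\varphi(h)$ together with boundedness of $\varphi$, whereas the paper uses the multiplicative form $\varphi(h)=a^h\,\varphi(-h)$ and lets the product of limits do the work.
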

\begin{proof}
Let $a \in \R^+$.
By Theorem \ref{thm: basic properties of a^x} part (vi) it will suffice to show that $\lim_{h \to 0} \frac{a^h - 1}{h}$ exists.
In fact, by Theorem \ref{thm: a^x has nondecreasing difference quotient} the nonempty set $\{\frac{a^h - 1}{h} : h \in \R, h > 0\}$ is bounded below, for instance by $\frac{a^{-1} - 1}{-1}$. Therefore $\inf_{h \in \R, h > 0} \frac{a^h - 1}{h}$ exists and is finite. Again by Theorem \ref{thm: a^x has nondecreasing difference quotient}, $\lim_{h \to 0^+} \frac{a^h - 1}{h}$ exists and equals $\inf_{h \in \R, h > 0} \frac{a^h - 1}{h}$.
We have
\begin{align*}
\lim_{h \to 0^+} \frac{a^h - 1}{h}
&= \lim_{h \to 0^-} \frac{a^{-h} - 1}{-h} \\
&= a^0 \lim_{h \to 0^-} \frac{a^{-h} - 1}{-h} \\
&= \left ( \lim_{h \to 0^-} a^h \right ) \left ( \lim_{h \to 0^-} \frac{a^{-h} - 1}{-h} \right ) \\
&= \lim_{h \to 0^-} \left (a^h \frac{a^{-h} - 1}{-h} \right ) \\
&= \lim_{h \to 0^-} \frac{a^h - 1}{h},
\end{align*}
where we have used Theorem \ref{thm: basic properties of a^x} parts (i), (iv) and (vi).
Therefore $\lim_{h \to 0^-} \frac{a^h - 1}{h}$ exists and equals $\lim_{h \to 0^+} \frac{a^h - 1}{h}$, hence $\lim_{h \to 0} \frac{a^h - 1}{h}$ exists, as required.
\end{proof}

\begin{definition}
Define the function $\ln: \R^+ \to \R$ by $\ln a = \frac{d}{dx} a^x|_{x=0}$.
\end{definition}

\begin{theorem}
\label{thm: a^x is continuously differentiable everywhere}
For $a > 0$, $a^x$ is continuously differentiable in $x$ at every $x \in \R$. Moreover for all $x \in \R$,
\begin{equation}
\label{eqn: the derivative of a^x is ln a times a^x}
\frac{d}{dx} a^x = (\ln a) a^x.
\end{equation}
\end{theorem}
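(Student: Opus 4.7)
The plan is to leverage the functional equation $a^{x+h}=a^x a^h$ from Theorem \ref{thm: basic properties of a^x}(i) together with the differentiability at $0$ established in Theorem \ref{thm: a^x is differentiable at zero}, which is exactly the calculation sketched informally in the introduction, carried out now with full rigor.

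First I would fix $a\in\R^+$ and $x\in\R$, and form the difference quotient
\[
\frac{a^{x+h}-a^x}{h}=\frac{a^x a^h-a^x}{h}=a^x\cdot\frac{a^h-1}{h},
\]
where the first equality uses Theorem \ref{thm: basic properties of a^x}(i). Since $a^x$ does not depend on $h$, I can take the limit as $h\to 0$: by Theorem \ref{thm: a^x is differentiable at zero} the factor $\frac{a^h-1}{h}$ tends to $\frac{d}{dx}a^x\big|_{x=0}=\ln a$ (this is the very definition of $\ln a$). Hence $\lim_{h\to 0}\frac{a^{x+h}-a^x}{h}$ exists and equals $(\ln a)\,a^x$, which is \eqref{eqn: the derivative of a^x is ln a times a^x}.

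For continuous differentiability, I would observe that the derivative $x\mapsto(\ln a)\,a^x$ is the product of the constant $\ln a$ with the function $x\mapsto a^x$, which is continuous on all of $\R$ by Theorem \ref{thm: basic properties of a^x}(iv). A continuous function times a constant is continuous, so $\frac{d}{dx}a^x$ is continuous in $x$.

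I do not anticipate a serious obstacle here: the real work was done in Theorems \ref{thm: a^x has nondecreasing difference quotient} and \ref{thm: a^x is differentiable at zero}, and the present statement is essentially a one-line consequence of the functional equation plus the definition of $\ln a$. The only small point worth stating explicitly is that the factor $a^x$ can be pulled outside the limit as a constant (with respect to $h$), which is immediate.
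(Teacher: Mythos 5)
Your proposal is correct and follows essentially the same route as the paper: factor the difference quotient via $a^{x+h}=a^x a^h$ (Theorem \ref{thm: basic properties of a^x}(i)), invoke Theorem \ref{thm: a^x is differentiable at zero} to identify the limit as $\ln a$, and deduce continuity of the derivative from Theorem \ref{thm: basic properties of a^x}(iv). No gaps.
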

\begin{proof}
Let $a > 0$ and let $x_0 \in \R$.
We have
\[
a^{x_0} \lim_{h \to 0} \frac{a^h - 1}{h} = \lim_{h \to 0} \frac{a^{x_0+h} - a^{x_0}}{h}
\]
by Theorem \ref{thm: basic properties of a^x} part (i).
By Theorem \ref{thm: a^x is differentiable at zero} the limit on the left hand side exists and equals $\ln a$. Since $x_0$ is arbitrary, $\frac{d}{dx} a^x$ exists everywhere and obeys equation \eqref{eqn: the derivative of a^x is ln a times a^x}.
The continuity of the derivative now follows from equation \eqref{eqn: the derivative of a^x is ln a times a^x} and Theorem \ref{thm: basic properties of a^x} part (iv).
\end{proof}

\begin{theorem}
\label{thm: the slope of a^x at x = 0 varies continuously and increasingly with a > 0}
For $a > 0$, $\ln a$ varies continuously with $a$ and is nondecreasing in $a$.
\end{theorem}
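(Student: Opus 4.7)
The plan is to treat monotonicity and continuity separately. Monotonicity is immediate from the definition, while continuity will follow from the two-sided characterization of $\ln a$ as both an infimum (over positive $h$) and a supremum (over negative $h$) of the difference quotients $\frac{a^h - 1}{h}$.

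For monotonicity, suppose $0 < a \leq b$. For any fixed $h > 0$, Theorem \ref{thm: basic properties of x^a} part (ii) gives $a^h \leq b^h$, hence $\frac{a^h - 1}{h} \leq \frac{b^h - 1}{h}$. Letting $h \to 0^+$ and invoking the existence of the limit (Theorem \ref{thm: a^x is differentiable at zero}) yields $\ln a \leq \ln b$.

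For continuity, I first record a fact implicit in the proof of Theorem \ref{thm: a^x is differentiable at zero}: for every $a > 0$,
\[
\ln a = \inf_{h > 0} \frac{a^h - 1}{h} = \sup_{h < 0} \frac{a^h - 1}{h}.
\]
The infimum identity is established in that proof; the supremum identity follows because, by Theorem \ref{thm: a^x has nondecreasing difference quotient}, the difference quotient is nondecreasing in $h$, so the left-hand limit at $0$ (which equals $\ln a$ by Theorem \ref{thm: a^x is differentiable at zero}) coincides with the supremum over $h < 0$. Now fix $a_0 > 0$ and $\epsilon > 0$. Choose $h_+ > 0$ with $\frac{a_0^{h_+} - 1}{h_+} < \ln a_0 + \epsilon/2$ and $h_- < 0$ with $\frac{a_0^{h_-} - 1}{h_-} > \ln a_0 - \epsilon/2$. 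Applied to an arbitrary $a > 0$, the inf/sup characterization then gives
\[
\frac{a^{h_-} - 1}{h_-} \leq \ln a \leq \frac{a^{h_+} - 1}{h_+}.
\]
Both the outer quantities are continuous in $a$ by Theorem \ref{thm: basic properties of x^a} part (iii), so on a small enough neighborhood of $a_0$ each differs from its value at $a_0$ by less than $\epsilon/2$; combining with the choice of $h_\pm$ yields $|\ln a - \ln a_0| < \epsilon$.

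I expect no serious obstacle here. The only step that is not an immediate application of earlier results is extracting the supremum identity $\ln a = \sup_{h < 0} \frac{a^h - 1}{h}$ from what was shown in Theorem \ref{thm: a^x is differentiable at zero}; once this is in place, continuity reduces to the standard sandwich argument in which $\ln a$ is trapped between two quantities that are continuous in $a$.
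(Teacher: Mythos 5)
Your proposal is correct and follows essentially the same route as the paper: both arguments derive monotonicity from the monotonicity of $a \mapsto a^h$ and then prove continuity from the two-sided characterization $\ln a = \inf_{h>0} \frac{a^h-1}{h} = \sup_{h<0} \frac{a^h-1}{h}$ together with the continuity of $a \mapsto a^h$ (Theorem \ref{thm: basic properties of x^a} part (iii)). The only difference is cosmetic: where you run an explicit $\epsilon$-sandwich trapping $\ln a$ between two difference quotients, the paper exchanges the order of iterated suprema (resp.\ infima) to identify the one-sided limits $\lim_{a \to a_0^{\pm}} \ln a$ with $\ln a_0$.
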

\begin{proof}
By Theorem \ref{thm: basic properties of x^a} parts (i) and (ii) $a^h$ is strictly decreasing in $a > 0$ given a fixed $h < 0$, and $a^h$ is strictly increasing in $a > 0$ given a fixed $h > 0$. It follows that, for a fixed $h \in \R \setminus \{0\}$, $\frac{a^h - 1}{h}$ is strictly increasing in $a > 0$. Therefore $\ln a = \lim_{h \to 0} \frac{a^h - 1}{h}$ is nondecreasing in $a > 0$, as required.
Fixing $a_0 > 0$ it follows that $\lim_{a \to a_0^-} \ln a$ exists and equals $\sup_{0 < a < a_0} \ln a$ and that $\lim_{a \to a_0^+} \ln a$ exists and equals $\inf_{a > a_0} \ln a$.
However, by Theorem \ref{thm: a^x has nondecreasing difference quotient} we have $\ln a = \sup_{h < 0} \frac{a^h - 1}{h}$. Therefore
\begin{align*}
\lim_{a \to a_0^-} \ln a	&= \sup_{0 < a < a_0} \ln a \\														&= \sup_{0 < a < a_0} \sup_{h < 0} \frac{a^h - 1}{h} \\
						&= \sup_{h < 0} \sup_{0 < a < a_0} \frac{a^h - 1}{h} \\
						&= \sup_{h < 0} \lim_{a \to a_0^-} \frac{a^h - 1}{h} \\
						&= \sup_{h < 0} \frac{a_0^h - 1}{h} \\
						&= \ln a_0,
\end{align*}
where we have also used Theorem \ref{thm: basic properties of x^a} part (iii).
Similarly, we show that $\lim_{a \to a_0^+} \ln a = \ln a_0$.
Indeed, by Theorem \ref{thm: a^x has nondecreasing difference quotient} we have $\ln a = \inf_{h > 0} \frac{a^h - 1}{h}$. Therefore
\begin{align*}
\lim_{a \to a_0^+} \ln a	&= \inf_{a > a_0} \ln a \\														&= \inf_{a > a_0} \inf_{h > 0} \frac{a^h - 1}{h} \\
						&= \inf_{h > 0} \inf_{a > a_0} \frac{a^h - 1}{h} \\
						&= \inf_{h > 0} \lim_{a \to a_0^+} \frac{a^h - 1}{h} \\
						&= \inf_{h > 0} \frac{a_0^h - 1}{h} \\
						&= \ln a_0,
\end{align*}
where we have used again Theorem \ref{thm: basic properties of x^a} part (iii).
Therefore $\lim_{a \to a_0} \ln a$ exists and equals $\ln a_0$, so $\ln a$ is continuous at $a_0$. Since $a_0 > 0$ is arbitrary, the Theorem is proved.
\end{proof}

\begin{theorem}
\label{thm: the existence of e}
There is a unique real number $e > 0$ such that $\ln e = 1$. Moreover $2 < e < 3$.
\end{theorem}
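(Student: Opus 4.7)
The plan is to combine the monotonicity and continuity of $\ln$ (Theorem \ref{thm: the slope of a^x at x = 0 varies continuously and increasingly with a > 0}) with explicit numerical bounds and the Intermediate Value Theorem to produce $e$, and then strengthen Theorem \ref{thm: the slope of a^x at x = 0 varies continuously and increasingly with a > 0} to strict monotonicity in order to obtain uniqueness.

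For the upper bound $\ln 2 < 1$, I would take $h = 1/2$ in the inequality $\ln 2 \leq \frac{2^h - 1}{h}$, which holds for every $h > 0$ by Theorem \ref{thm: a^x has nondecreasing difference quotient}. This gives $\ln 2 \leq 2(\sqrt{2} - 1)$; since $(3/2)^2 = 9/4 > 2$ we have $\sqrt 2 < 3/2$, whence $\ln 2 < 1$. For the lower bound $\ln 3 > 1$, I would take $h = -1/6$ in the complementary inequality $\ln 3 \geq \frac{3^h - 1}{h}$ valid for $h < 0$, obtaining $\ln 3 \geq 6(1 - 3^{-1/6})$. This exceeds $1$ if and only if $3^{1/6} > 6/5$, equivalently $(6/5)^6 < 3$; and indeed $(6/5)^6 = 46656/15625$ while $3 \cdot 15625 = 46875 > 46656$. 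The Intermediate Value Theorem, applied to $\ln$ on $[2,3]$, then yields some $e \in (2,3)$ with $\ln e = 1$.

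For uniqueness I would show that $\ln$ is in fact strictly increasing on $\R^+$. Given $0 < a < b$, using $(b/a)^h = b^h / a^h$ (Theorem \ref{thm: basic properties of a^x} part (ix)), continuity of $a^h$ at $0$ (Theorem \ref{thm: basic properties of a^x} parts (iv) and (vi)), and the algebra of limits, one computes
\[
\ln b - \ln a = \lim_{h \to 0} \frac{b^h - a^h}{h} = \lim_{h \to 0} a^h \cdot \frac{(b/a)^h - 1}{h} = \ln(b/a).
\]
Applying Theorem \ref{thm: a^x has nondecreasing difference quotient} to $b/a > 1$ with $h = -1$ yields $\ln(b/a) \geq 1 - a/b > 0$, so $\ln b > \ln a$, and uniqueness of $e$ follows.

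The main obstacle is the lower bound on $\ln 3$: the natural first choices $h = \pm 1, \pm 1/2, \pm 1/3, \pm 1/4, \pm 1/5$ all produce bounds on $\ln 3$ that fail to exceed $1$, so one must search for a suitable rational $h$ whose associated difference quotient is both explicitly computable and beats $1$. The choice $h = -1/6$ clears the threshold, but only by a margin of about $1\%$, making this numerical step somewhat delicate.
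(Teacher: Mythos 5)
Your proposal is correct, and the existence half is essentially identical to the paper's: the same choices $h = \tfrac12$ for $\ln 2 \leq 2(\sqrt2 - 1) < 1$ and $h = -\tfrac16$ for $\ln 3 \geq 6(1 - 3^{-1/6}) > 1$, followed by the Intermediate Value Theorem via Theorem \ref{thm: the slope of a^x at x = 0 varies continuously and increasingly with a > 0}; you merely add the explicit rational verifications $\sqrt2 < 3/2$ and $(6/5)^6 = 46656/15625 < 3$, which the paper leaves implicit. The uniqueness half is where you genuinely diverge. The paper supposes $\ln b = 1$, applies the quotient rule and equation \eqref{eqn: the derivative of a^x is ln a times a^x} to get $\frac{d}{dx}\frac{e^x}{b^x} = 0$, concludes $e^x = Cb^x$ with $C = 1$, and evaluates at $x = 1$; this leans on Theorem \ref{thm: a^x is continuously differentiable everywhere} and on the Mean Value Theorem (a function with vanishing derivative is constant). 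You instead prove outright that $\ln$ is strictly increasing, by establishing the logarithm law $\ln b - \ln a = \ln(b/a)$ from the limit definition and then bounding $\ln(b/a) \geq 1 - a/b > 0$ via Theorem \ref{thm: a^x has nondecreasing difference quotient} at $h = -1$. Your route is more elementary (it needs only the difference-quotient machinery, not the global derivative formula or the MVT), it proves Corollary \ref{cor: ln a is strictly increasing} directly rather than deriving it later from Theorem \ref{thm: our ln is the logarithm to base e} --- avoiding the detour through the existence of $e$ that the paper's own derivation of that corollary takes --- and it yields the additive law of logarithms as a free byproduct; the paper's argument is shorter on the page but imports heavier tools. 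Both are valid and non-circular, since everything you invoke precedes this theorem in the development.
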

\begin{proof}
By Theorem \ref{thm: a^x has nondecreasing difference quotient} we have
\begin{align*}
\ln 2 = \inf_{h > 0} \frac{2^h - 1}{h}	&\leq \frac{2^{\frac{1}{2}} - 1}{\frac{1}{2}} \\
									&= 2(\sqrt{2} - 1) \\
									&< 1
\end{align*}
and
\begin{align*}
\ln 3 = \sup_{h < 0} \frac{3^h - 1}{h}	&\geq \frac{3^{-\frac{1}{6}} - 1}{-\frac{1}{6}} \\
									&= 6 \left ( 1 - \frac{1}{\sqrt[6]{3}} \right ) \\
									&> 1.
\end{align*}
By the Intermediate Value Theorem and Theorem \ref{thm: the slope of a^x at x = 0 varies continuously and increasingly with a > 0} there exists some real number $e \in (2,3)$ such that $\ln e = 1$.
Now suppose that there is some $b > 0$ with the property that $\ln b = 1$. Then by the quotient rule and equation \eqref{eqn: the derivative of a^x is ln a times a^x}, $\frac{d}{dx} \frac{e^x}{b^x} = \frac{(\ln e) e^x b^x - e^x (\ln b) b^x}{(b^x)^2} = \frac{e^x b^x - e^x b^x}{(b^x)^2} = 0$.
Therefore $e^x = C b^x$ for some $C \in \R$.
Setting $x = 0$ and applying Theorem \ref{thm: basic properties of a^x} part (vi) shows that $C = 1$, hence $e^x = b^x$ for all $x \in \R$ and in particular by Theorem \ref{thm: basic properties of a^x} part (v) $e = e^1 = b^1 = b$.
Therefore $e$ is unique.
\end{proof}

By equation \eqref{eqn: the derivative of a^x is ln a times a^x} and Theorem \ref{thm: the existence of e} we have the following.
\begin{corollary}
\label{cor: e^x is its own derivative}
For $x \in \R$, $\displaystyle \frac{d}{dx} e^x = e^x$.
\end{corollary}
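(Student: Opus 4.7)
The plan is to observe that Corollary \ref{cor: e^x is its own derivative} is immediate from specializing Theorem \ref{thm: a^x is continuously differentiable everywhere} to the particular base $a = e$, whose existence has just been established. No new analytic work is required; the entire force of the statement was already packed into the earlier results.

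Concretely, I would first invoke Theorem \ref{thm: a^x is continuously differentiable everywhere}, which guarantees that for every $a > 0$ and every $x \in \R$ we have $\frac{d}{dx} a^x = (\ln a) a^x$. Since Theorem \ref{thm: the existence of e} asserts the existence of a positive real number $e$ with $\ln e = 1$, I would then set $a = e$ in equation \eqref{eqn: the derivative of a^x is ln a times a^x} and simplify the coefficient $(\ln e)$ to $1$. This yields $\frac{d}{dx} e^x = 1 \cdot e^x = e^x$ for all $x \in \R$, which is exactly the statement of the corollary.

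There is no genuine obstacle to overcome here: the whole purpose of introducing $e$ as the (unique) base whose logarithm is $1$ was to make this substitution trivial. The substantive content lives upstream in Theorem \ref{thm: a^x is differentiable at zero} (the existence of the limit $\lim_{h\to 0}(a^h-1)/h$), in Theorem \ref{thm: a^x is continuously differentiable everywhere} (the passage from differentiability at $0$ to differentiability everywhere via Theorem \ref{thm: basic properties of a^x} part (i)), and in Theorem \ref{thm: the existence of e} (where the Intermediate Value Theorem, applied to the continuous monotone function $\ln$ from Theorem \ref{thm: the slope of a^x at x = 0 varies continuously and increasingly with a > 0}, produced the desired base). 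The corollary itself is merely the natural conclusion of the ETA program laid out in the Motivation section.
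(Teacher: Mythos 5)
Your proposal is correct and matches the paper exactly: the paper likewise obtains the corollary by setting $a = e$ in equation \eqref{eqn: the derivative of a^x is ln a times a^x} and using $\ln e = 1$ from Theorem \ref{thm: the existence of e}. Nothing further is needed.
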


By Theorem \ref{thm: basic properties of a^x} parts (iii), (vii) and (viii), $a^x$ is strictly monotone and maps $\R$ onto $\R^+$, provided $a \in \R^+ \setminus \{1\}$. Therefore for such $a$, the function $a^x : \R \to \R^+$ is invertible.
\begin{definition}
\label{def: logarithm to base a}
For $a \in \R^+ \setminus \{1\}$, $x \in \R^+$ and $y \in \R$ we define the logarithm to base $a$, $\log_a : \R^+ \to \R$ by $\log_a x = y$ if and only if $a^y = x$.
\end{definition}

\begin{theorem}
\label{thm: our ln is the logarithm to base e}
For $a > 0$ \ $\ln a = \log_e a$.
\end{theorem}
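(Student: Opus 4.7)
The plan is to reduce the claim to the identity $\ln(e^x) = x$ for every $x \in \R$, which immediately gives $e^{\ln a} = a$ and hence, by Definition \ref{def: logarithm to base a} (applicable since $e > 1 \neq 1$ by Theorem \ref{thm: the existence of e}), $\log_e a = \ln a$.

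To establish the identity $\ln(e^x) = x$, I would start from the definition of $\ln$ and invoke Theorem \ref{thm: basic properties of a^x} part (ii) to rewrite $(e^x)^y = e^{xy}$, obtaining
\[
\ln(e^x) = \left. \frac{d}{dy} (e^x)^y \right|_{y=0} = \left. \frac{d}{dy} e^{xy} \right|_{y=0}.
\]
The chain rule together with Corollary \ref{cor: e^x is its own derivative} gives $\frac{d}{dy} e^{xy} = x\, e^{xy}$, which equals $x$ at $y=0$ by Theorem \ref{thm: basic properties of a^x} part (vi).

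For the conclusion, I would note that Theorem \ref{thm: basic properties of a^x} parts (iii), (iv), (vii) and (viii), together with the Intermediate Value Theorem, imply that $y \mapsto e^y$ is a continuous bijection from $\R$ onto $\R^+$ (using $e > 1$ from Theorem \ref{thm: the existence of e}). So for any $a \in \R^+$ there is a unique $y \in \R$ with $e^y = a$, which is by definition $y = \log_e a$. Applying the identity of the previous paragraph then yields $\ln a = \ln(e^y) = y = \log_e a$.

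The only point that might be called an obstacle is the first step: spotting that part (ii) of Theorem \ref{thm: basic properties of a^x} recasts $\ln(e^x)$ as the derivative at $0$ of $y \mapsto e^{xy}$, which the chain rule then trivializes. The alternative route of directly proving $a^x = e^{x \ln a}$ via a vanishing-derivative argument (showing $a^x / e^{x \ln a}$ is constant) is possible but requires the same chain rule ingredient and an additional quotient-rule step.
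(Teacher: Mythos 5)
Your proof is correct and follows essentially the same route as the paper's: both write $a = e^{\log_e a}$, invoke Theorem \ref{thm: basic properties of a^x} part (ii) to get an identity of the form $(e^u)^v = e^{uv}$, and then differentiate via the chain rule together with $\frac{d}{dx}e^x = e^x$. The only (cosmetic) difference is that you evaluate the derivative at $0$, where it is $\ln$ by definition, whereas the paper differentiates $a^x = e^{x\log_e a}$ at a general point using equation \eqref{eqn: the derivative of a^x is ln a times a^x} and then divides by $a^x$.
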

\begin{proof}
Let $a > 0$. By Theorem \ref{thm: basic properties of a^x} part (ii), $a^x = (e^{\log_e a})^x = e^{x \log_e a}$. By equation \eqref{eqn: the derivative of a^x is ln a times a^x}, Theorem \ref{thm: the existence of e} and the Chain Rule we have
\[
(\ln a) a^x = \frac{d}{dx} a^x = \frac{d}{dx} e^{x \log_e a} = e^{x \log_e a} \frac{d}{dx} (x \log_e a) = (\log_e a) a^x
\]
and the result follows on dividing both sides by $a^x$.
\end{proof}

The function $\ln: \R^+ \to \R$ is called the Natural logarithm. It is sometimes also called the Napierian logarithm, although this is misleading - Napier's logarithm tables were not in fact tables of logarithms to base $e$ \cite{Edwards_book}.
In view of Theorem \ref{thm: basic properties of a^x} part (iii) the conclusion of Theorem \ref{thm: the slope of a^x at x = 0 varies continuously and increasingly with a > 0} can be strengthened:

\begin{corollary}
\label{cor: ln a is strictly increasing}
$\ln a$ is strictly increasing in $a > 0$.
\end{corollary}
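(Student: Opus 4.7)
The plan is to reduce strict monotonicity to two facts: that $\ln c > 0$ whenever $c > 1$, and that $\ln$ obeys the multiplicative-to-additive identity $\ln(xy) = \ln x + \ln y$ for $x, y > 0$. Granted these, for any $0 < a < b$ I can set $c = b/a > 1$ and apply additivity to $b = a \cdot c$ to obtain $\ln b = \ln a + \ln c > \ln a$, which is the required strict inequality.

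For the sign of $\ln c$, I would use part (iii) of Theorem \ref{thm: basic properties of a^x}, as pointed to in the preamble to the Corollary. From Theorem \ref{thm: a^x has nondecreasing difference quotient} the quantity $\ln c$ equals $\sup_{h < 0} \frac{c^h - 1}{h}$, so it is bounded below by $\frac{c^{-1} - 1}{-1}$. Since $c > 1$, part (iii) yields $c^{-1} < c^0 = 1$ strictly, whence this lower bound is strictly positive and thus $\ln c > 0$. For the additivity, I would differentiate the identity $(xy)^t = x^t y^t$ (Theorem \ref{thm: basic properties of a^x} part (ix)) at $t = 0$ using the derivative formula \eqref{eqn: the derivative of a^x is ln a times a^x} and the product rule; the values $x^0 = y^0 = 1$ then collapse the right-hand side into $\ln x + \ln y$, while the left-hand side is $\ln(xy)$.

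The main obstacle, and the reason why Theorem \ref{thm: the slope of a^x at x = 0 varies continuously and increasingly with a > 0} produced only nondecreasing behaviour in the first place, is that strict inequality among the difference quotients $\frac{a^h - 1}{h}$ collapses to weak inequality once one passes to the limit $h \to 0$. Some non-limiting mechanism is therefore needed to recover strictness. The additivity identity above supplies one; alternatively, I could argue by contradiction in the style of the uniqueness half of Theorem \ref{thm: the existence of e}: if $\ln a = \ln b$ then the quotient rule together with \eqref{eqn: the derivative of a^x is ln a times a^x} gives $\frac{d}{dx}(a^x/b^x) = 0$, so $a^x = b^x$ for all $x$, and setting $x = 1$ yields $a = b$, contradicting $a < b$.
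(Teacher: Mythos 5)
Your proposal is correct, but it takes a genuinely different route from the paper's. The paper's (implicit, one-line) argument is positional: having just proved $\ln a = \log_e a$ in Theorem \ref{thm: our ln is the logarithm to base e}, it observes that since $e > 1$ the map $y \mapsto e^y$ is strictly increasing by Theorem \ref{thm: basic properties of a^x} part (iii), so its inverse $\log_e = \ln$ is strictly increasing; that is the whole content of the sentence ``In view of Theorem \ref{thm: basic properties of a^x} part (iii) the conclusion \ldots can be strengthened.'' You instead establish additivity, $\ln(xy) = \ln x + \ln y$, by differentiating $(xy)^t = x^t y^t$ at $t = 0$ via \eqref{eqn: the derivative of a^x is ln a times a^x} and the product rule, and combine it with the strict positivity $\ln c \geq \frac{c^{-1}-1}{-1} = 1 - c^{-1} > 0$ for $c > 1$, which you correctly extract from $\ln c = \sup_{h<0}\frac{c^h-1}{h}$ (a fact already used inside the proof of Theorem \ref{thm: the slope of a^x at x = 0 varies continuously and increasingly with a > 0}). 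Both steps check out, as does your alternative contradiction argument modelled on the uniqueness half of Theorem \ref{thm: the existence of e} --- though note that one gives only injectivity and must be combined with the nondecreasingness of Theorem \ref{thm: the slope of a^x at x = 0 varies continuously and increasingly with a > 0} to yield strict monotonicity, as you implicitly do. What your main route buys: it is independent of the existence of $e$ and of Theorem \ref{thm: our ln is the logarithm to base e}, so the corollary could be stated right after Theorem \ref{thm: a^x is continuously differentiable everywhere}, and it produces the identity $\ln(xy) = \ln x + \ln y$ as a useful by-product. What the paper's route buys is brevity, given that $\ln = \log_e$ is already in hand. Your diagnosis that strictness cannot come from merely sharpening the limiting argument of Theorem \ref{thm: the slope of a^x at x = 0 varies continuously and increasingly with a > 0} is exactly right; some non-limiting mechanism is required, and both you and the paper supply one.
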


\begin{theorem}
\label{thm: derivative of log_a x}
For $a \in \R^+ \setminus \{1\}$ the logarithmic function $x \mapsto \log_a x$ is differentiable for all $x > 0$, and $\displaystyle \frac{d}{dx} \log_a x = \frac{1}{x \ln a}$.
\end{theorem}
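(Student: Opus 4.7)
The plan is to apply the Inverse Function Theorem. By Definition \ref{def: logarithm to base a}, $\log_a$ is the inverse of the function $y \mapsto a^y : \R \to \R^+$, which by Theorem \ref{thm: a^x is continuously differentiable everywhere} is continuously differentiable with derivative $(\ln a) a^y$. To invoke the Inverse Function Theorem, I need this derivative to be nonzero on the whole domain. Since $a^y > 0$ by Theorem \ref{thm: basic properties of a^x} part (vii), it suffices to show that $\ln a \neq 0$ whenever $a \neq 1$.

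First I would observe that $\ln 1 = 0$. This follows immediately from the fact that $1^x = 1$ for all $x \in \R$, so the difference quotient $\frac{1^h - 1}{h}$ is identically $0$, and its limit at $h = 0$ is $0$. Combined with Corollary \ref{cor: ln a is strictly increasing}, which asserts that $\ln$ is strictly increasing on $\R^+$, we get $\ln a \neq 0$ for all $a \in \R^+ \setminus \{1\}$.

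With this in hand, the Inverse Function Theorem applies: for any $x > 0$, setting $y = \log_a x$ so that $a^y = x$, we obtain
\[
\frac{d}{dx} \log_a x = \frac{1}{\frac{d}{dy} a^y \big|_{y = \log_a x}} = \frac{1}{(\ln a) a^y} = \frac{1}{x \ln a},
\]
which is the desired conclusion.

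The only real obstacle is the nonvanishing of $\ln a$ for $a \neq 1$, and this is easily dispatched as above by combining the trivial computation of $\ln 1$ with the earlier strict monotonicity corollary. Everything else is a routine invocation of the Inverse Function Theorem applied to results proved earlier in the paper.
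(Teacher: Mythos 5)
Your proposal is correct and follows essentially the same route as the paper: both apply the Inverse Function Theorem to $y \mapsto a^y$, whose continuous differentiability and formula $\frac{d}{dy}a^y = (\ln a)a^y$ come from Theorem \ref{thm: a^x is continuously differentiable everywhere}. If anything you are slightly more careful than the paper, whose proof cites only the positivity of $a^y$ (Theorem \ref{thm: basic properties of a^x} part (vii)) when asserting the derivative is nonzero, whereas you explicitly supply the needed fact that $\ln a \neq 0$ for $a \neq 1$ by computing $\ln 1 = 0$ and invoking Corollary \ref{cor: ln a is strictly increasing}.
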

\begin{proof}
Let $a > 0$, $a \neq 1$. By Theorem \ref{thm: a^x is continuously differentiable everywhere} and Theorem \ref{thm: basic properties of a^x} part (vii) the derivative of $a^x$ with respect to $x$ exists everywhere and is continuous and nonzero. Therefore for $x > 0$, by the Inverse Function Theorem and equation \eqref{eqn: the derivative of a^x is ln a times a^x} $\log_a x$ is continuously differentiable and
\[
\frac{d}{dx} \log_a x = \frac{1}{\left . \frac{d}{dy} a^y \right |_{y=\log_a x}} = \frac{1}{\left . (\ln a) a^y \right |_{y=\log_a x}} = \frac{1}{(\ln a) a^{\log_a x}} = \frac{1}{x \ln a}.
\]
\end{proof}

In particular by Theorem \ref{thm: our ln is the logarithm to base e}, Theorem \ref{thm: derivative of log_a x} and Theorem \ref{thm: the existence of e} we have the following.
\begin{corollary}
\label{cor: derivative of ln x}
For $x > 0$, $\displaystyle \frac{d}{dx} \ln x = \frac{1}{x}$.
\end{corollary}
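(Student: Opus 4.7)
The plan is to simply chain together the three cited results. By Theorem \ref{thm: derivative of log_a x} applied with base $a = e$ (which is a valid base since by Theorem \ref{thm: the existence of e} we have $e \in (2,3)$, so certainly $e \in \R^+ \setminus \{1\}$), the function $x \mapsto \log_e x$ is differentiable on $\R^+$ with
\[
\frac{d}{dx} \log_e x = \frac{1}{x \ln e}.
\]

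Next I would invoke Theorem \ref{thm: our ln is the logarithm to base e}, which identifies the function $\ln$ (defined as the derivative of $a^x$ at $0$) with $\log_e$. This identification means the differentiability of $\log_e$ transfers immediately to $\ln$, and the derivative formula above is a formula for $\frac{d}{dx} \ln x$.

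Finally, by the defining property of $e$ from Theorem \ref{thm: the existence of e}, $\ln e = 1$, so the denominator $x \ln e$ collapses to $x$, yielding $\frac{d}{dx} \ln x = \frac{1}{x}$ for all $x > 0$.

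There is no real obstacle here, since all the substantive work has already been carried out in the preceding theorems; the corollary is a one-line substitution. The only thing worth being careful about is noting explicitly that $e \neq 1$ so that Theorem \ref{thm: derivative of log_a x} applies, and that the identification $\ln = \log_e$ from Theorem \ref{thm: our ln is the logarithm to base e} legitimizes replacing $\log_e x$ by $\ln x$ on the left-hand side.
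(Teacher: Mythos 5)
Your proposal is correct and matches the paper exactly: the corollary is obtained by combining Theorem \ref{thm: derivative of log_a x} with $a=e$, the identification $\ln = \log_e$ from Theorem \ref{thm: our ln is the logarithm to base e}, and $\ln e = 1$ from Theorem \ref{thm: the existence of e}. The paper gives no further detail than this, so your slightly more explicit write-up (noting $e \neq 1$) is the same argument.
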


The following integral is used in ``Late Transcendental Functions" editions of Calculus textbooks as the definition of $\ln x$.
\begin{theorem}
\label{thm: area under graph of 1/x}
For $x > 0$, $\displaystyle \int_1^x \frac{1}{t} \, dt = \ln x$.
\end{theorem}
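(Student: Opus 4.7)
The plan is to recognize the stated identity as an immediate consequence of the Fundamental Theorem of Calculus (which, at this point in an analysis course, is presumably available) once one knows an antiderivative of $1/t$ on $\R^+$. Corollary \ref{cor: derivative of ln x} tells us precisely that $\ln$ is such an antiderivative, so the content of the theorem reduces to pinning down the constant of integration.

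First I would observe that $t \mapsto 1/t$ is continuous on $\R^+$, hence integrable on any compact subinterval of $\R^+$; in particular the integral $\int_1^x \frac{1}{t}\,dt$ is defined for every $x > 0$ (interpreting the integral from $1$ to $x$ in the usual oriented sense when $0 < x < 1$). Next, by Corollary \ref{cor: derivative of ln x}, the natural logarithm is differentiable on $\R^+$ with derivative $1/t$, so the Fundamental Theorem of Calculus yields
\[
\int_1^x \frac{1}{t}\,dt = \ln x - \ln 1.
\]

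It then remains only to check that $\ln 1 = 0$. This is immediate from the definition: the constant function $x \mapsto 1^x$ equals $1$ everywhere, hence has derivative $0$ at $x = 0$, so $\ln 1 = \frac{d}{dx} 1^x \big|_{x=0} = 0$. (Alternatively one can invoke Theorem \ref{thm: our ln is the logarithm to base e} together with the fact that $e^0 = 1$, giving $\ln 1 = \log_e 1 = 0$.) Substituting yields $\int_1^x \frac{1}{t}\,dt = \ln x$, as required.

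I do not anticipate a main obstacle here; the work has all been done in the preceding theorems. The only pitfall is to remember to handle the case $0 < x < 1$ by the standard convention $\int_1^x = -\int_x^1$ so that the FTC applies without alteration.
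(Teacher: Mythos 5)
Your proposal is correct and follows essentially the same route as the paper, which likewise cites the Fundamental Theorem of Calculus, Corollary \ref{cor: derivative of ln x}, Theorem \ref{thm: basic properties of a^x} part (vi) and Theorem \ref{thm: our ln is the logarithm to base e} (the latter two supplying $\ln 1 = \log_e 1 = 0$, exactly your alternative justification). Your write-up simply spells out the details that the paper leaves as a one-line citation.
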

\begin{proof}
By the first part of the Fundamental Theorem of Calculus, Corollary \ref{cor: derivative of ln x}, Theorem \ref{thm: basic properties of a^x} part (vi) and Theorem \ref{thm: our ln is the logarithm to base e}.
\end{proof}

The following limit is sometimes used as the definition of $e^x$, in which case the existence of the limit must be proved by means different from ours. \footnote{For instance by the Monotone Convergence Theorem, see \cite{Bartle_and_Sherbert_book} for the case $x = 1$.}
\begin{theorem}
\label{thm: sequence limit formula for e^x}
$\displaystyle \lim_{n \to \infty} \left ( 1 + \frac{x}{n} \right )^n = e^x$.
\end{theorem}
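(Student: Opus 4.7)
The plan is to take logarithms and recognize the resulting expression as a difference quotient. First I dispose of the trivial case $x = 0$: here $(1 + x/n)^n = 1 = e^0$ for all $n$. So fix $x \in \R \setminus \{0\}$.

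Next, for $n$ large enough (specifically $n > -x$ when $x < 0$), we have $1 + x/n > 0$, so $\ln(1 + x/n)$ is defined. By Theorem \ref{thm: our ln is the logarithm to base e} and the definition of $\log_e$, $1 + x/n = e^{\ln(1 + x/n)}$, and then Theorem \ref{thm: basic properties of a^x} part (ii) gives
\[
(1 + x/n)^n = e^{n \ln(1 + x/n)}.
\]
The strategy is now to show $n \ln(1 + x/n) \to x$ and then conclude by continuity of $y \mapsto e^y$ (which follows from Theorem \ref{thm: basic properties of a^x} part (iv), or from Corollary \ref{cor: e^x is its own derivative}).

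For the inner limit, I rewrite
\[
n \ln\!\bigl(1 + \tfrac{x}{n}\bigr) = x \cdot \frac{\ln\!\bigl(1 + \tfrac{x}{n}\bigr) - \ln 1}{x/n},
\]
using that $\ln 1 = \log_e 1 = 0$. Letting $h_n = x/n$, we have $h_n \to 0$ with $h_n \neq 0$, so the quotient on the right is the difference quotient of $\ln$ at the point $1$ evaluated along the sequence $h_n$. By Corollary \ref{cor: derivative of ln x}, this difference quotient has limit $\tfrac{d}{dx} \ln x|_{x=1} = 1$, and the sequential characterization of functional limits then gives $n \ln(1 + x/n) \to x$.

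Composing with the exponential and using its continuity, $(1 + x/n)^n = e^{n \ln(1 + x/n)} \to e^x$, as required. The argument is essentially routine once the machinery of the earlier sections is in place; the only subtlety is the implicit invocation of the sequential characterization of $\lim_{h \to 0}$ at the step where the difference quotient is evaluated along the particular sequence $h_n = x/n$. No calculation in the proof is delicate.
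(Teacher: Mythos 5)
Your proof is correct, and its overall architecture matches the paper's: both write $\left(1+\frac{x}{n}\right)^n = e^{n\ln(1+x/n)}$ via Theorem \ref{thm: basic properties of a^x} part (ii) and Theorem \ref{thm: our ln is the logarithm to base e}, reduce the problem to showing $n\ln(1+\frac{x}{n}) \to x$, and finish by continuity of the exponential. The difference lies in how that inner limit is evaluated. The paper rewrites $t\ln(1+\frac{x}{t})$ as a $\frac{0}{0}$ quotient and applies L'H\^opital's rule; you instead factor out $x$ and recognize $\frac{\ln(1+x/n)-\ln 1}{x/n}$ as a difference quotient of $\ln$ at $1$, which converges to $\ln'(1)=1$ by Corollary \ref{cor: derivative of ln x} and the sequential characterization of limits. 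Your route is more elementary --- it needs only the definition of the derivative rather than L'H\^opital's rule, which is in keeping with the paper's stated aim of minimizing machinery. Your version is also more complete: the paper's proof opens with ``Let $x, t \in \R^+$'' and so only explicitly treats $x>0$, whereas you dispose of $x=0$ separately and note that for $x<0$ one needs $n>-x$ to ensure $1+\frac{x}{n}>0$, which is exactly the point the paper glosses over.
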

\begin{proof}
Let $x, t \in \R^+$.
By Theorem \ref{thm: basic properties of a^x} part (ii) and Theorem  \ref{thm: our ln is the logarithm to base e} we have
\begin{equation}
\label{eqn: exponential formula for (1 + x/t)^t}
\left (1 + \frac{x}{t} \right )^t = e^{t \log_e (1 + \frac{x}{t})} = e^{t \ln (1 + \frac{x}{t})}.
\end{equation}
The limit, as $t$ tends to infinity, of the left-hand side of the equation below has the indeterminate form $\infty \cdot 0$, so we write it as a fraction and apply L'Hopital's rule:
\begin{equation}
\label{eqn: application of L'Hopital's Rule}
\lim_{t \to \infty} t \ln \left (1 + \frac{x}{t} \right ) = \lim_{t \to \infty} \frac{\ln \left (1 + \frac{x}{t} \right )}{\frac{1}{t}} = \lim_{t \to \infty} \frac{-\frac{x}{t^2}}{-\frac{1}{t^2}(1 + \frac{x}{t})} = \lim_{t \to \infty} \frac{x}{1 + \frac{x}{t}} = x.
\end{equation}
By Theorem \ref{thm: basic properties of a^x} part (iv) and equations \eqref{eqn: exponential formula for (1 + x/t)^t} and \eqref{eqn: application of L'Hopital's Rule} we have $\lim_{t \to \infty} \left (1 + \frac{x}{t} \right )^t = e^x$.
The result follows on replacing $t > 0$ by $n \in \N$.
\end{proof}

The following Taylor-Maclaurin series is often used as the definition of $e^x$, in which case the convergence of the series for all $x \in \R$ must be proved differently than it is proved here (for instance by the Ratio Test.)
\begin{theorem}
\label{thm: Taylor-Maclaurin series for e^x}
For all $x \in \R$
\[
e^x = \sum_{k=0}^{\infty} \frac{1}{k!} x^k.
\]
\end{theorem}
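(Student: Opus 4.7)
The plan is to apply Taylor's theorem with Lagrange remainder, a standard tool in a first analysis course, to the function $f(x) = e^x$. By Corollary \ref{cor: e^x is its own derivative} and induction on $k$, all derivatives $f^{(k)}$ exist on all of $\R$ and satisfy $f^{(k)}(x) = e^x$, so $f^{(k)}(0) = e^0 = 1$ by Theorem \ref{thm: basic properties of a^x} part (vi). The Maclaurin polynomial of degree $n$ is therefore $P_n(x) = \sum_{k=0}^n x^k/k!$, and it remains to show that the Lagrange remainder
\[
R_n(x) = e^x - P_n(x) = \frac{e^{c_n}}{(n+1)!}\, x^{n+1},
\]
where $c_n$ lies strictly between $0$ and $x$, tends to $0$ as $n \to \infty$ for each fixed $x \in \R$.

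Fix $x \in \R$. To bound $e^{c_n}$ uniformly in $n$, I would use that $e > 1$ together with Theorem \ref{thm: basic properties of a^x} part (iii) to conclude that $a \mapsto e^a$ is strictly increasing, so that $e^{c_n} \leq e^{|x|}$ regardless of the sign of $x$. Thus
\[
|R_n(x)| \leq e^{|x|} \cdot \frac{|x|^{n+1}}{(n+1)!}.
\]

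It then suffices to show that $|x|^{n+1}/(n+1)! \to 0$. This is the one routine but unavoidable computation. Fix an integer $N > 2|x|$; then for $n \geq N$ the ratio of consecutive terms satisfies $|x|/(n+2) < 1/2$, so
\[
\frac{|x|^{n+1}}{(n+1)!} \leq \frac{|x|^{N}}{N!} \left(\frac{1}{2}\right)^{n+1-N},
\]
which tends to $0$ geometrically. Combined with the previous bound, this gives $R_n(x) \to 0$ and hence $e^x = \lim_{n \to \infty} P_n(x) = \sum_{k=0}^\infty x^k/k!$, as required.

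I do not foresee a major obstacle: the hardest conceptual point is simply the appeal to Taylor's theorem with the Lagrange form of the remainder, which at this stage of an analysis course is available. (A self-contained alternative, if one wished to avoid invoking Taylor's theorem, would be to proceed by induction on $n$ and use repeated integration by parts against the identity $\int_0^x e^t\, dt = e^x - 1$, which follows from Corollary \ref{cor: e^x is its own derivative} and the Fundamental Theorem of Calculus; this would produce the integral form of the remainder, bounded in the same way.)
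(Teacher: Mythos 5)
Your proposal is correct and follows essentially the same route as the paper: Taylor's theorem with the Lagrange remainder applied to $e^x$, a uniform bound on $e^{c_n}$ (you use $e^{|x|}$ where the paper uses $\max(1,e^x)$), and the standard factorial-beats-power estimate to kill the remainder. The only differences are cosmetic choices in the final estimate.
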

\begin{proof}
By Taylor's theorem with the Lagrange form for the remainder and Theorem \ref{thm: the existence of e} and Corollary \ref{cor: e^x is its own derivative}, for all $x \in \R$ and all $n \in \N^*$
\[
e^x = \sum_{k=0}^n \frac{\frac{d^k}{dx^k} e^x |_{x=0}}{k!} x^k + \frac{\frac{d^{n+1}}{dx^{n+1}} e^x |_{x = \xi}}{(n+1)!}x^{n+1} = \sum_{k=0}^n \frac{1}{k!} x^k + \frac{e^{\xi}}{(n+1)!}x^{n+1}
\]
for some $\xi$ between 0 and $x$. But
\[
\left |\frac{e^{\xi}}{(n+1)!}x^{n+1} \right | \leq \max(1, e^x)\frac{|x|^{n+1}}{(n+1)!}
\]
and this bound on the remainder term tends to zero as $n$ tends to infinity. Indeed, if $N \in \N^*$ is such that $N + 1 > |x|$, then for $n > N$
\[
\frac{|x|^n}{n!} = \frac{|x|^N}{N!} \frac{|x|^{n-N}}{(N+1) \cdots n} \leq \frac{|x|^N}{N!} \frac{|x|}{n} \xrightarrow{n \to \infty} 0.
\]
\end{proof}

\vfill\eject

\end{document}